\documentclass[12pt,reqno]{amsart}
\usepackage{graphicx}
\usepackage{fullpage, amsmath, amssymb, amsthm}
\usepackage{hyperref}
\usepackage{mathtools}
\usepackage{colonequals}

\newtheorem{theorem}{Theorem}[section]
\newtheorem{proposition}[theorem]{Proposition}
\newtheorem{lemma}[theorem]{Lemma}
\newtheorem{corollary}[theorem]{Corollary}
\theoremstyle{definition}

\newtheorem{question}[theorem]{Question}
\theoremstyle{remark}
\newtheorem{remark}[theorem]{Remark}

\newcommand{\bF}{\mathbb{F}}
\newcommand{\Fp}{\mathbb{F}_p}
\newcommand{\Fq}{\mathbb{F}_q}
\newcommand{\bP}{\mathbb{P}}

\title{Covering plane curves over finite fields with lines}

\author{Shamil Asgarli}
\address{Department of Mathematics \& Computer Science \\ Santa Clara University \\ Santa Clara, CA 95053 \\ United States} \email{sasgarli@scu.edu}

\author{Sarthak Dassarma}
\address{Department of Mathematics \\ Columbia University \\ 116th and Broadway \\ New York, NY 10027 \\ United States}
\email{s.dassarma@columbia.edu}

\author{Dragos Ghioca}
\address{Department of Mathematics \\ University of British Columbia \\ Vancouver, BC V6T 1Z2\\ Canada}
\email{dghioca@math.ubc.ca}

\author{Simon MacLean}
\address{Department of Mathematics \\ Duke University \\ 120 Science Dr \\ Durham, NC 27710 \\ United States} \email{simon.maclean@duke.edu}

\author{Evan Stokdyk}
\address{Department of Mathematics \& Computer Science \\ Santa Clara University \\ Santa Clara, CA 95053 \\ United States} \email{estokdyk@scu.edu}

\subjclass[2020]{Primary 14G15; Secondary 14H50, 05B40, 05C65}

\keywords{Plane curves, finite fields, coverings by lines, reflexive curves, Chebotarev density theorem, monodromy, hypergraph coverings}

\begin{document}

\begin{abstract}
We prove that, for every fixed $d\geq 2$ and $\varepsilon>0$, the $\mathbb{F}_q$-points of a geometrically irreducible reflexive plane curve of degree $d$ over $\mathbb{F}_q$ can be covered by at most $\left(\frac{1}{d}+\varepsilon\right)q$ many $\mathbb{F}_q$-lines, provided that $q$ is sufficiently large. The same conclusion holds for $\mathbb{F}_q$-irreducible curves that are not geometrically irreducible, since they have only $O(d^2)$ $\mathbb{F}_q$-points. In particular, for sufficiently large $q$, the bound applies to every $\mathbb{F}_q$-irreducible plane curve of degree $d$ when $\mathrm{char}(\mathbb{F}_q)>d$. Our bound is asymptotically sharp. The proof combines Entin's Chebotarev density theorem for varieties over finite fields with the Pippenger--Spencer theorem on covers of nearly regular hypergraphs.
\end{abstract}

\maketitle

\section{Introduction}

Let $C \subset \bP^2$ be an $\Fq$-irreducible projective plane curve of degree $d\geq 2$ defined over $\Fq$. We are interested in the following natural question.

\begin{question}
What is the minimum number of $\Fq$-lines in $\bP^2$ whose union contains all points of $C(\Fq)$?
\end{question}

We write $\operatorname{cov}(C)$ for this minimum. If $C$ is not geometrically irreducible, then the problem is easy. Indeed, over $\overline{\Fq}$, the curve $C$ is the union of $r\geq 2$ Galois-conjugate irreducible components $C_1,\ldots,C_r$, each of degree $d/r$. Every point of $C(\Fq)$ lies on each of these components. In particular, $C(\Fq)\subset C_1\cap C_2$, and B\'ezout's theorem gives
\[
|C(\Fq)|\leq \left(\frac{d}{r}\right)^2\leq \frac{d^2}{4}.
\]
Consequently, $C(\Fq)$ can be covered by $O(d^2)$ $\Fq$-lines. We may therefore focus on the geometrically irreducible case.

By pairing up the points, we can always cover $C(\Fq)$ using $q/2+O_d(\sqrt{q})$ lines. However, as we will explain below, we can do much better.

B\'ezout's theorem shows that each line contains at most $d$ distinct $\Fq$-points of $C$. By the Aubry--Perret bound~\cite{AP96} for geometrically irreducible plane curves, we have $|C(\Fq)|\geq q+1-(d-1)(d-2)\sqrt q$. It follows that we need at least
\begin{equation}\label{eq:lower-bound}
\frac{q+1}{d} - \frac{(d-1)(d-2)\sqrt{q}}{d}
\end{equation}
$\Fq$-lines to cover $C(\Fq)$.

We write $C_{\mathrm{sm}}$ for the smooth locus of $C$. Given a geometrically irreducible plane curve $C$, the \emph{dual curve} $C^\vee\subset(\bP^2)^\vee$ parametrizes the tangent lines to $C$. We say that $C$ is \emph{reflexive} if its Gauss map
\[
C_{\mathrm{sm}}\longrightarrow C^\vee,
\qquad
P\longmapsto T_PC,
\]
is separable. 

Our main result is the following theorem.

\begin{theorem}\label{thm:main}
Fix $d\geq 2$ and $\varepsilon>0$. There exists a constant $M_{d,\varepsilon}$ such that the following holds. Let $C\subset\bP^2$ be an $\Fq$-irreducible plane curve of degree $d$ defined over $\Fq$. If $C$ is geometrically irreducible, assume in addition that $C$ is reflexive. If $q>M_{d,\varepsilon}$, then $C(\Fq)$ can be covered by at most $\left(\frac{1}{d}+\varepsilon\right)q$ many $\Fq$-lines.
\end{theorem}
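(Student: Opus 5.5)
The non-geometrically-irreducible case is already handled by the B\'ezout argument in the introduction: $|C(\Fq)|\le d^2/4$, so $O(d^2)\le \varepsilon q$ lines suffice once $q$ is large. From now on assume $C$ is geometrically irreducible and reflexive. Write $V=C(\Fq)$, of size $q+O_d(\sqrt q)$ by the Hasse--Weil/Aubry--Perret bounds.

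Consider the hypergraph $H$ on $V$ whose edges are the sets $L\cap C(\Fq)$ for those $\Fq$-lines $L$ that meet $C$ in exactly $d$ distinct $\Fq$-points. Each such line is transversal and contains no singular point. The goal is to show $H$ is $d$-uniform and nearly regular with small codegrees. Then the Pippenger--Spencer theorem gives an almost perfect matching: a matching covering all but $o(|V|)$ vertices. That matching uses at most $|V|/d$ lines. The uncovered $o(q)$ points are covered one line each. The total is $(1/d+o(1))q$ lines.

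The heart of the proof is the degree estimate. For a point $P\in V$ outside a bounded exceptional set, consider the pencil of lines through $P$. Projection from $P$ gives a degree $d-1$ cover $\pi_P\colon \tilde C\to\bP^1$, and lines through $P$ with $d$ rational points on $C$ correspond to totally split fibres of $\pi_P$. I would set this up uniformly: let $X\to(\bP^2)^\vee$ be the incidence cover of degree $d$ (line together with a point of $C$ on it). Reflexivity is where the hypothesis enters. By the classical argument (the general line is a simple tangent, giving a transposition, together with double transitivity), the geometric monodromy group of $X\to(\bP^2)^\vee$ is the full $S_d$. The same holds for the restricted family over the pencil through a general point $P$, with monodromy $S_{d-1}$. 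This is the step I expect to be the main obstacle. One must show the pencil through a general point, or even through every point off a set of size $O_d(1)$, has full monodromy $S_{d-1}$. That requires controlling inflection/bitangent behaviour, which is exactly what can fail for non-reflexive curves in small characteristic. One must also make the statement uniform in $q$ and $C$, since all curves of degree $d$ form a bounded family.

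Granting full monodromy, Entin's Chebotarev density theorem with error $O_d(\sqrt q)$ gives, for good $P$,
\[
\deg_H(P)=\frac{q}{(d-1)!}+O_d(\sqrt q).
\]
For the boundedly many bad points I would simply delete them from $V$ and cover them separately with $O_d(1)$ lines. The codegree of two distinct points is at most $1$, since they determine a unique line. So $H$ is $D$-regular up to factor $1+O(q^{-1/2})$ with $D\asymp q$ and codegree $o(D)$. The Pippenger--Spencer hypotheses then hold for $q>M_{d,\varepsilon}$, which completes the plan.
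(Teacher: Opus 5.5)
Your outline follows the paper's proof step for step. It uses the same $d$-uniform hypergraph of completely split $\Fq$-lines, codegrees at most $1$, and deletion of $O_d(1)$ bad points; the singular points must be among them, and all edges through deleted points must go too, which moves each degree by only $O_d(1)$. It then applies Entin's theorem to get degrees $q/(d-1)!+O_d(\sqrt q)$ and finishes with Pippenger--Spencer; your almost-perfect-matching form and the paper's cover form are interchangeable here.

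The genuine gap is the step you flag and then grant: that $\phi_P$ has geometric monodromy $S_{d-1}$ for every smooth $P$ outside a set of size $O_d(1)$, uniformly in $C$ and $q$. This does not follow by restricting the $S_d$ monodromy of the full incidence cover. Over the pencil through $P\in C$ that cover acquires the section $P$, so the group certainly drops into a point stabilizer, and the whole question is whether it drops further. Nothing in your sketch rules that out. It matters because full geometric monodromy is also what gives $\nu=1$, which makes the identity class admissible in Entin's theorem. For the paper's strange curve $yz^{p^2-1}=x^{p^2}$ one has $\nu=2$ and no completely split fibre at all. You also need $\operatorname{comp}(\phi_P)=O_d(1)$ uniformly in $P$ for the error term. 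The paper gets this by taking $P=[0:1:0]$ and $T_PC=\{z=0\}$, so $f=F(x,y,1)$ is monic of degree $d-1$ in $y$ and $\phi_P$ is the projection $\{f=0,\ \operatorname{Disc}_y f\neq0\}\to\{\operatorname{Disc}_y f\neq0\}$.

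The missing idea is to work at the generic point $\eta$ of $C$, over $K=k(C)$. There the function-field extension of $\phi_\eta$ coincides with that of the cover of $V=\{(P,L)\}$ obtained by marking a second intersection point. Hence its monodromy over $K$ is the point stabilizer $S_{d-1}\subset S_d$ for free.

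Closed points $P$, however, see the monodromy $H$ over $\overline K$, which a priori is only a normal subgroup of $S_{d-1}$. So it suffices to produce one transposition in $H$. It comes from a simple tangent meeting $C$ transversally at $\bar\eta$, with tame ramification since $p\neq2$. Such a tangent exists: otherwise the points lying transversally on simple tangents would form a finite set, and one of them would lie on every tangent line. That point would be a strange point, which a reflexive curve does not have. So beyond the existence of simple tangents, the extra input from reflexivity is non-strangeness.

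Finally, the monodromy group $G_P$ of $\phi_P$ equals $S_{d-1}$ exactly when the cover $W_P$ of orderings of the fibre is irreducible. Geometric irreducibility of the generic fibre $W_\eta$ spreads to the fibres over a dense open subset of $C_{\mathrm{sm}}$, so the exceptional set $B_C$ is finite. Constructibility of the bad locus in the universal family of degree-$d$ plane curves over $\mathbb Z$ then bounds $|B_C|$ by a constant depending only on $d$.
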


\begin{corollary}\label{cor:large-characteristic}
The conclusion of Theorem~\ref{thm:main} holds for all $\Fq$-irreducible curves $C$ whenever $\operatorname{char}(\Fq)>d$.
\end{corollary}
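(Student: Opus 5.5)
The corollary follows from Theorem~\ref{thm:main} once we check that its extra hypothesis always holds when $p=\operatorname{char}(\Fq)>d$. We split into the two cases of the theorem.

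If $C$ is not geometrically irreducible, Theorem~\ref{thm:main} imposes no further hypothesis, so its conclusion applies directly. (Alternatively, the B\'ezout argument in the introduction gives $|C(\Fq)|\le d^2/4$. Then $C(\Fq)$ is covered by at most $d^2/4$ lines, which is below $(1/d+\varepsilon)q$ once $q$ is large.)

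If $C$ is geometrically irreducible, it remains to show that $C$ is reflexive, i.e.\ that the Gauss map $C_{\mathrm{sm}}\to C^\vee$ is separable when $p>d$. I would use the classical criterion of Wallace, Hefez and Kleiman: a plane curve is reflexive iff its Gauss map is separable. A direct argument goes as follows. Work over $\overline{\Fq}$ and choose coordinates so that a general smooth point $P$ is the origin, with the tangent line there equal to $y=0$. Locally near $P$ the curve is $y=f(x)$ with $f$ a power series.

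\begin{itemize}
\item The Gauss map in the affine chart sends $x\mapsto (f'(x),\, f(x)-xf'(x))$. Its differential is nonzero at a general point provided $f''\not\equiv 0$.
\item If $f''\equiv 0$ identically, then every point of $C$ is a flex, i.e.\ a point of intersection multiplicity $\ge 3$ with its tangent line.
\item Now use the Hessian. In characteristic $p>d$, and in fact already for $p\ne 2$ with $p\nmid (d-1)$, the Euler relation makes the Hessian curve $H$ of degree $3(d-2)$ meet $C$ exactly at singular points and flexes.
\item If every point were a flex, $C$ would be contained in $H$. Pass to an affine chart so that $\partial F/\partial y\neq0$ generically.
\item Implicit differentiation gives $y''=-(F_{xx}F_y^2-2F_{xy}F_xF_y+F_{yy}F_x^2)/F_y^3$. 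Thus $y''\equiv0$ on $C$ means $C$ divides this polynomial.
\item Reduce to a contradiction via the standard fact that $y''\equiv 0$ forces $f(x)$ to be linear plus a series in $x^{p}$. From this we need to conclude that the tangent line meets $C$ with multiplicity at least $p$ at the general point.
\end{itemize}

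The cleanest form of the last step: if $f''\equiv0$ in characteristic $p$, then $f(x)=a+bx+\sum_{k\ge p} c_kx^k$ with all terms $x^k$, $k\ge2$, having $k(k-1)\equiv 0\pmod p$. Hence the smallest $k\geq 2$ with $c_k\ne0$ satisfies $k\ge p$. So the tangent line at a general point meets $C$ with multiplicity $\ge p>d$, contradicting B\'ezout. The only escape is that $C$ equals its tangent line, which is impossible for $d\ge2$.

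This contradiction gives $f''\not\equiv0$. Therefore the Gauss map has nonzero differential generically, so it is separable and $C$ is reflexive.

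The main obstacle is making this local intersection-multiplicity argument rigorous. First, the local parametrization $y=f(x)$ at a general smooth point needs a non-vertical tangent, which is arranged by a generic coordinate change. Second, we must identify the order of vanishing of $f(x)-(a+bx)$ with the intersection multiplicity of $C$ and the tangent line. Both are standard. Once $C$ is reflexive, Theorem~\ref{thm:main} applies with the same constant $M_{d,\varepsilon}$, giving the corollary.
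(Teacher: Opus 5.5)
Your proposal is correct and has the same skeleton as the paper. The non-geometrically-irreducible case is covered by Theorem~\ref{thm:main} as stated. In the geometrically irreducible case, everything reduces to showing that $C$ is reflexive when $p=\operatorname{char}(\Fq)>d$. The only difference is in this last step. The paper settles it in one line by citing Hefez: a nonreflexive plane curve other than a line has degree at least $p$. You instead prove exactly this fact, in contrapositive form, from scratch.

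Stripped of the detour, your argument runs as follows. Take a smooth point $P$ with tangent $y=0$ and local expansion $y=f(x)=\sum_{k\geq2}c_kx^k$.
\begin{enumerate}
\item The Gauss map pulls back the slope coordinate $s$ of the dual plane to $f'(x)$, so $ds=f''(x)\,dx$.
\item The contact order $m=I_P(C,T_PC)$ is the least $k$ with $c_k\neq0$; such a $k$ exists because $C$ is not a line.
\item B\'ezout gives $2\leq m\leq d<p$, so the coefficient $m(m-1)c_m$ of $x^{m-2}$ in $f''$ is nonzero.
\item Hence $ds\neq0$ in $\Omega_{k(C)}$ for some $s\in k(C^\vee)$, which makes $k(C)/k(C^\vee)$ separable.
\end{enumerate}
The paper's route costs one citation. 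Yours is self-contained and uses only the local expansion, B\'ezout, and the differential criterion for separability.

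Some tidying is in order.
\begin{itemize}
\item The Hessian bullets are never used and can be deleted.
\item The appeal to the Wallace--Hefez--Kleiman criterion is unnecessary, since the paper defines reflexivity as separability of the Gauss map.
\item The phrase ``linear plus a series in $x^p$'' is inaccurate, because terms $x^{jp+1}$ also survive. The condition you actually use, $c_k\neq0\Rightarrow k(k-1)\equiv0\pmod p$, is the correct one.
\item The argument reads more cleanly stated directly, as above, rather than by contradiction.
\end{itemize}
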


\begin{proof}
If $C$ is not geometrically irreducible, then Theorem~\ref{thm:main} applies without any further assumption. Suppose, therefore, that $C$ is geometrically irreducible. A nonreflexive plane curve that is not a line has degree at least $\operatorname{char}(\Fq)$; see~\cite[p.~5]{Hef89}. Since $C$ is not a line and has degree $d<\operatorname{char}(\Fq)$, it must be reflexive. The conclusion now follows from Theorem~\ref{thm:main}.
\end{proof}

In view of the lower bound~\eqref{eq:lower-bound}, Theorem~\ref{thm:main} is asymptotically sharp.

\begin{remark}\label{rem:Fq-irreducibility-necessary}
The assumption that $C$ is $\Fq$-irreducible cannot be dropped. To see this, fix an integer $\ell\geq 2$ and, for $q\geq\ell$, choose distinct elements $c_1,\ldots,c_\ell\in\Fq$. Let $C$ be the union of the $\ell$ $\Fq$-conics obtained as the projective closures of
\[
y=x^2+c_i,\qquad 1\leq i\leq\ell.
\]
The sets of affine $\Fq$-points of these conics are pairwise disjoint, while all of the conics contain the point $[0:1:0]$ at infinity. Consequently,
\[
|C(\Fq)|=\ell q+1.
\]
On the other hand, any line meets each of the $\ell$ conics in at most two points and therefore contains at most $2\ell$ points of $C(\Fq)$. It follows that
\[
\operatorname{cov}(C)
\geq
\left\lceil\frac{\ell q+1}{2\ell}\right\rceil
>
\frac{q}{2}.
\]
Since $\deg(C)=2\ell$, the bound of Theorem~\ref{thm:main} fails for $C$ whenever $0<\varepsilon<\frac12-\frac{1}{2\ell}$.
\end{remark}

The proof of Theorem~\ref{thm:main} in the geometrically irreducible case has two main steps. First, we show that, outside a bounded exceptional set, the number of $\Fq$-lines through a smooth point $P\in C(\Fq)$ that meet $C$ in $d$ distinct smooth $\Fq$-points is
\[
\frac{q}{(d-1)!}+O_d(\sqrt q).
\]
This estimate is obtained by studying the monodromy of projection from $P$ and applying Entin's Chebotarev density theorem. We then form a $d$-uniform hypergraph whose vertices are the smooth $\Fq$-points of $C$ outside this exceptional set and whose edges are the sets of $d$ collinear vertices. The preceding estimate shows that this hypergraph is nearly regular, and the Pippenger--Spencer theorem produces the desired cover.

The intersection of lines and plane curves over finite fields is a recurring theme in arithmetic geometry and incidence combinatorics. Much of the recent literature concerns completely split line sections, which intersect the curve in the maximum possible number of distinct rational points. The existence of these lines also plays a key role in the proof of our main theorem. In related work, Diem~\cite{Die12} counts line-section divisors that split completely into $\Fq$-points on reflexive curves, and his \cite[Proposition~10]{Die12} closely parallels our Proposition~\ref{prop:degree}. From a different perspective, Makhul, Schicho, and Gallet~\cite{MSG20} use a function-field Chebotarev theorem to analyze the distribution of rational points within these line sections. While these earlier studies focus on counting or local distributions, we solve a global covering problem.

We conclude the introduction with a second motivation for the problem. When $d=3$ and $C$ is smooth with an $\Fq$-rational flex $O$, choosing $O$ as the identity makes $C$ an elliptic curve with the usual chord--tangent group law. In this case we can draw on existing results about abelian groups, in particular on the literature concerning the ``zero-sum property.''

For such an elliptic curve $E/\Fq$, we have
$E(\Fq)\cong\mathbb{Z}/a\mathbb{Z}\times\mathbb{Z}/b\mathbb{Z}$ with
$a\mid b$~\cite[Theorem~4.1, p.~97]{Was08}. A partition of $E(\Fq)$ into zero-sum triples yields a cover by the corresponding lines, since a line meets $E$ in three points summing to $O$ under the chord--tangent law. Writing $n=|E(\Fq)|$, if $n$ is odd, Tannenbaum's theorem~\cite{Tan81} on Skolem partitions gives
\[
\operatorname{cov}(E)=\frac{n}{3}+O(1).
\]
However, for $d>3$ the set $C(\Fq)$ has no analogous group structure, so we instead turn to tools from arithmetic geometry and the probabilistic method.

\section{Monodromy of projections from points}\label{sec:projection-monodromy}

In the geometrically irreducible case, the first step in the proof of Theorem~\ref{thm:main} is to estimate the number of $\Fq$-lines through a smooth point $P\in C(\Fq)$ that meet $C$ in $d$ distinct smooth $\Fq$-points. We do so by studying the projection from $P$.

This projection gives a finite cover of the pencil of lines through $P$, whose fiber over a line consists of the remaining $d-1$ points of intersection with $C$. All of these points are defined over $\Fq$ exactly when the Frobenius map acts trivially on the fiber. Entin's Chebotarev density theorem~\cite{Ent21} counts the fibers with this property once the monodromy group of the cover is known. We begin by recalling the necessary terminology.

Let $\phi\colon X\to Y$ be a finite \'etale morphism of degree $m$ between quasiprojective varieties defined over $\Fq$, where $Y$ is normal and geometrically irreducible. Fix a geometric point of $Y$; since $\phi$ is finite \'etale, the fiber over this point consists of $m$ distinct points. The \'etale fundamental group of $Y$ permutes these points, and choosing an ordering of the fiber turns this action into a homomorphism to the symmetric group $S_m$. A different ordering conjugates the image, so the resulting subgroup of $S_m$ is well defined up to conjugacy. We call this subgroup the \emph{arithmetic monodromy group} of $\phi$ and denote it by $\operatorname{Mon}(X/Y)$.

After extending the base field from $\Fq$ to $\overline{\Fq}$, the same construction gives the \emph{geometric monodromy group}
\[
\operatorname{Mon}^{\mathrm{g}}(X/Y)
=
\operatorname{Mon}\bigl(X\times_{\Fq}\overline{\Fq}\,/\,Y\times_{\Fq}\overline{\Fq}\bigr).
\]
The geometric monodromy group is a normal subgroup of the arithmetic monodromy group. More precisely, there is an integer $\nu\geq 1$ and an exact sequence
\begin{equation}\label{eq:monodromy-exact-sequence}
1\longrightarrow \operatorname{Mon}^{\mathrm{g}}(X/Y)
\longrightarrow \operatorname{Mon}(X/Y)
\longrightarrow \operatorname{Gal}(\bF_{q^\nu}/\Fq)
\longrightarrow 1,
\end{equation}
where $\bF_{q^\nu}$ is the smallest finite extension of $\Fq$ over which the arithmetic monodromy group of the base-changed cover coincides with the geometric monodromy group.

For a point $y\in Y(\Fq)$, the $q$-power Frobenius map permutes the $m$ geometric points of the fiber $\phi^{-1}(y)$. We call the conjugacy class of the resulting permutation in $\operatorname{Mon}(X/Y)$ the \emph{Frobenius class} of $\phi$ at $y$ and denote it by $\operatorname{Fr}_{\phi}(y)$. Its cycle lengths are the degrees of the closed points in the fiber. In particular, $\operatorname{Fr}_{\phi}(y)$ is the identity class precisely when the fiber consists of $m$ distinct $\Fq$-points. The group $\operatorname{Gal}(\bF_{q^\nu}/\Fq)$ is cyclic, generated by the automorphism $x\mapsto x^q$, and the final map in~\eqref{eq:monodromy-exact-sequence} sends every Frobenius class $\operatorname{Fr}_{\phi}(y)$ to this generator. 

The same definitions and hypotheses apply over any field $F$, with arithmetic monodromy over $F$ and geometric monodromy over an algebraic closure $\overline F$. For a finite separable map from a smooth projective integral curve to $\bP^1_F$, monodromy is the Galois group of the Galois closure of the extension of function fields. It stays the same after restricting to a nonempty open subset over which the map is \'etale.

The implied constant in Entin's error term depends only on the complexity of the cover, which we now define. A locally closed set $Z\subset\bP^N$ has \emph{complexity at most $M$} if $N\leq M$ and $Z$ can be described by equations $F_1=\cdots=F_s=0$ together with a single nonvanishing condition $G\neq 0$, where $F_1,\ldots,F_s$ and $G$ are homogeneous polynomials of degree at most $M$. The complexity of a morphism $\phi$ is the complexity of its graph, embedded in projective space via the Segre embedding. We write $\operatorname{comp}(\phi)$ for the complexity of $\phi$, and $O_{\operatorname{comp}(\phi)}(\cdot)$ denotes a bound whose implied constant depends only on $\operatorname{comp}(\phi)$. In particular, a bound on the complexity gives an estimate uniform in $q$.

We can now state the form of Entin's Chebotarev density theorem \cite[Theorem~3]{Ent21} that we need. It counts the points $y\in Y(\Fq)$ whose Frobenius class is a prescribed conjugacy class $\mathcal C$, under the necessary condition that $\mathcal C$ maps to the Frobenius generator of $\operatorname{Gal}(\bF_{q^\nu}/\Fq)$.

\begin{theorem}[Entin's Chebotarev density theorem]\label{thm:entin-chebotarev}
Let $\phi\colon X\to Y$ be a finite \'etale morphism of quasiprojective varieties defined over $\Fq$, where $Y$ is normal and geometrically irreducible. Let $\mathcal{C}$ be a conjugacy class in $\operatorname{Mon}(X/Y)$ whose image under the final map in~\eqref{eq:monodromy-exact-sequence} is the automorphism $x\mapsto x^q$. Then
\[
\bigl|\{y\in Y(\Fq):\operatorname{Fr}_{\phi}(y)=\mathcal{C}\}\bigr|
=
\frac{\nu|\mathcal{C}|}{|\operatorname{Mon}(X/Y)|}q^{\dim Y}
\left(1+O_{\operatorname{comp}(\phi)}(q^{-1/2})\right).
\]
Here $\nu\geq 1$ is the integer appearing in~\eqref{eq:monodromy-exact-sequence}.

When $Y$ is a curve, so that $\dim Y=1$, the error term is
$O_{\operatorname{comp}(\phi)}(\sqrt q)$.
\end{theorem}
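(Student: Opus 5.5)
Theorem~\ref{thm:entin-chebotarev} is quoted from \cite[Theorem~3]{Ent21}, so in the paper it suffices to check that our hypotheses and normalisation match Entin's. I would also sketch the standard argument, for two reasons: to make clear where the factor $\nu$ comes from, and to explain why the constant depends only on $\operatorname{comp}(\phi)$. The plan is to pass to a Galois closure, twist it by elements of $\mathcal C$, and count rational points on the twists with Lang--Weil.

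\emph{Galois closure.} Let $m=\deg\phi$. Form the $m$-fold fibre product $X\times_Y\cdots\times_Y X$ and remove all big diagonals. Let $Z$ be an $\Fq$-irreducible component of the result. Then $Z\to Y$ is finite étale Galois with group $G=\operatorname{Mon}(X/Y)$, and $G$ acts on $Z$ by permuting coordinates. Over $\overline{\Fq}$, the variety $Z$ splits into $\nu$ geometric components. These are permuted transitively and cyclically by the geometric Frobenius, and each is stabilised by $\operatorname{Mon}^{\mathrm g}(X/Y)$. This is exactly the exact sequence \eqref{eq:monodromy-exact-sequence}. Because $Z$ is cut out from a fibre power of the graph of $\phi$, its complexity is bounded in terms of $\operatorname{comp}(\phi)$ and $m$. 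Moreover $m$ is itself bounded by the complexity, via Bézout.

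\emph{Twisted count.} Fix $g\in\mathcal C$ and set
\[
Z_g=\{z\in Z(\overline{\Fq}) : \operatorname{Frob}(z)=g\cdot z\}.
\]
This is the set of $\Fq$-points of the twist of $Z$ by $g$. A point $y\in Y(\Fq)$ has $\operatorname{Fr}_\phi(y)=\mathcal C$ exactly when the fibre of $Z$ over $y$ contains a point of $Z_g$. In that case the fibre contains exactly $|C_G(g)|=|G|/|\mathcal C|$ such points, since the action on the fibre is free. Hence
\[
\#\{y:\operatorname{Fr}_\phi(y)=\mathcal C\}=\frac{|\mathcal C|}{|G|}\,|Z_g|.
\]
Now $g$ maps to the Frobenius generator of $\operatorname{Gal}(\bF_{q^\nu}/\Fq)$. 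Therefore $g^{-1}\circ\operatorname{Frob}$ fixes each of the $\nu$ geometric components of $Z$, so each twisted component is a geometrically irreducible $\Fq$-variety of dimension $\dim Y$. Applying Lang--Weil to each of them, in its form uniform in complexity, gives
\[
|Z_g|=\nu q^{\dim Y}\bigl(1+O(q^{-1/2})\bigr).
\]
Combining the two displays yields the stated formula. When $\dim Y=1$ the relative error $O(q^{-1/2})$ becomes the absolute error $O(\sqrt q)$.

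\emph{Main obstacle.} The delicate point is uniformity. The Lang--Weil constant must depend only on the complexity of the twisted components, and bounding those complexities in terms of $\operatorname{comp}(\phi)$ requires effective bounds on degrees of components and twists. This is precisely the technical content supplied by \cite{Ent21}, so at that step I would cite Entin rather than redo it.
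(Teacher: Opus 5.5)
Your proposal is correct and takes the same approach as the paper, which gives no proof of this statement and, as you propose, simply imports it as \cite[Theorem~3]{Ent21}. Your added sketch is the standard argument behind that citation: you pass to the Galois closure $Z$, twist by $g\in\mathcal C$, and observe that $g^{-1}\circ\mathrm{Frob}$ fixes each of the $\nu$ geometric components of $Z$, which correctly accounts for the factor $\nu$, and you rightly defer the uniformity of the Lang--Weil constant in $\operatorname{comp}(\phi)$ to Entin.
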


We shall apply Theorem~\ref{thm:entin-chebotarev} to the following cover. For a smooth geometric point $P\in C$, let $\mathcal L_P\cong\bP^1$ be the pencil of lines through $P$. Let $U_P\subset\mathcal L_P$ be the open set of those lines $L$ for which $L\cap C$ consists of $d$ distinct smooth geometric points, and define
\[
X_P=\{(Q,L)\in C_{\mathrm{sm}}\times U_P:Q\in L,\ Q\neq P\}.
\]
Consider the projection
\[
\phi_P\colon X_P\longrightarrow U_P,
\qquad
(Q,L)\longmapsto L.
\]
For every $L\in U_P$, the fiber $\phi_P^{-1}(L)$ consists of the $d-1$ points of $(L\cap C)\setminus\{P\}$.

To count the completely split fibers, we will apply Theorem~\ref{thm:entin-chebotarev} with the identity conjugacy class. We first establish that $\phi_P$ is finite \'etale in Lemma~\ref{lem:incidence-cover}. Then Lemma~\ref{lem:generic-projection-monodromy} computes its geometric monodromy when $P$ is the generic point of $C$, and Lemma~\ref{lem:projection-monodromy} gives the geometric and arithmetic monodromy groups outside a set of points whose size is bounded in terms of $d$ alone.

\begin{lemma}\label{lem:incidence-cover}
Let $C\subset\bP^2$ be a geometrically irreducible reflexive curve of degree $d\geq2$ defined over $\Fq$, and put $k=\overline{\Fq}$. For every $P\in C_{\mathrm{sm}}(k)$, the set $U_P$ is nonempty and the map $\phi_P\colon X_P\to U_P$ is finite \'etale of degree $d-1$.
\end{lemma}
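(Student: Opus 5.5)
Two things need proof: that $U_P$ is nonempty, and that $\phi_P$ is finite étale of degree $d-1$. The second is essentially formal once the first is known, so the real content is the nonemptiness.

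\textbf{Nonemptiness of $U_P$.} We must exhibit a line through $P$ meeting $C$ in $d$ distinct smooth points. The lines through $P$ form $\mathcal L_P\cong\bP^1$, and only finitely many of them pass through a singular point of $C$. For the remaining lines $L$, we need $L$ to be transverse to $C$ at every intersection point, including $P$ itself. Transversality at $P$ fails only for $L=T_PC$. Transversality at some $Q\neq P$ fails exactly when $L=T_QC$ with $Q\in C_{\mathrm{sm}}$. So the condition is that only finitely many tangent lines to $C$ at points $Q\neq P$ pass through $P$, i.e. that $P$ is not a strange point.

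\begin{itemize}
\item The lines through $P$ form a line $P^\vee\subset(\bP^2)^\vee$.
\item Infinitely many tangent lines through $P$ would mean $C^\vee\subset P^\vee$, so $C^\vee=P^\vee$ is a line.
\item By reflexivity, $C=(C^\vee)^\vee$, and the dual of a line is a point. This is absurd since $C$ is a curve.
\end{itemize}

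Hence $P^\vee\cap C^\vee$ is finite, and $U_P$ is the complement of finitely many points in $\mathcal L_P$. This is the step where the reflexivity hypothesis is genuinely used; without it, $P$ could be a strange point. I will take biduality for reflexive curves as the standard input, citing Hefez.

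\textbf{Finite étale of degree $d-1$.}

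\begin{itemize}
\item Write $\widetilde X=\{(Q,L)\in C\times U_P:Q\in L\}$. This is a closed subscheme of $C\times U_P$, hence proper over $U_P$.
\item Over $U_P$, every point $Q\in L\cap C$ is smooth and distinct from the others. So $\widetilde X$ is contained in $C_{\mathrm{sm}}\times U_P$, and the section $L\mapsto(P,L)$ is a connected component.
\item It is a component because it is both closed and open, the points of $L\cap C$ being pairwise distinct. Its complement is therefore $X_P$, which is closed in $\widetilde X$ and hence proper over $U_P$.
\item $\phi_P$ is quasi-finite and proper, hence finite. Each fiber consists of the $d-1$ reduced points of $(L\cap C)\setminus\{P\}$.
\end{itemize}

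\textbf{Étaleness.} I will check this locally in coordinates.

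\begin{itemize}
\item Parametrize the pencil near $L_0$ by a coordinate $t$ and the points of $L$ by $s$, and let $F$ be the equation of $C$.
\item Then $X_P$ is cut out locally by $F(s,t)=0$ inside the smooth surface of pairs (point, line through $P$).
\item Transversality of $L_0$ at $Q$ means $\partial F/\partial s\neq 0$ at $(Q,L_0)$.
\item By the Jacobian criterion, $X_P$ is smooth there and $\phi_P$ is unramified, hence étale.
\end{itemize}

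Alternatively, one can note that $\phi_P$ is finite and flat, since $\widetilde X$ is a relative Cartier divisor of degree $d$ in the $\bP^1$-bundle over $U_P$, and that its fibers are reduced. Either way, $\phi_P$ is finite étale of degree $d-1$.
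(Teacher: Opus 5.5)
Your proof is correct, and it takes a more hands-on route than the paper. The paper's route is as follows.
\begin{itemize}
\item It passes to the normalization $\widetilde C$ and defines the residual map $\pi_P\colon\widetilde C\to\mathcal L_P$ of degree $d-1$.
\item It deduces that $\pi_P$ is separable from the fact that $P$ is not a strange point, citing Hefez and Entin.
\item It then gets nonemptiness and finite étaleness together: a separable map has only finitely many branch values, and over their complement $\pi_P$ is finite étale and is identified with $\phi_P$ via normalization.
\end{itemize}
You use the same geometric input: reflexivity rules out $C^\vee=P^\vee$, which is exactly the statement that $P$ is not strange. But you apply it directly to show that only finitely many members of the pencil are tangent to $C$ somewhere. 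You then check properness, quasi-finiteness and étaleness by hand on the incidence surface of pairs (point, line through $P$), using the Jacobian criterion.

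What each approach buys:
\begin{itemize}
\item Your version avoids the citation on separability of projections from non-strange points and the branch-value step, and it makes the scheme-theoretic étaleness of $\phi_P$ explicit.
\item The paper's version sets up the smooth proper model $\pi_P$ and the identification of $\phi_P$ with $\pi_P|_{U_P}$. It reuses this in Lemma~\ref{lem:generic-projection-monodromy}, where a transposition is read off from the ramification of $\pi_{\bar\eta}$ at a simple tangent.
\end{itemize}

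One small ordering point in your argument. Pairwise distinctness of the points of $L\cap C$ does not by itself make the section $L\mapsto(P,L)$ open in $\widetilde X$. Openness does follow from your own Jacobian computation, which applies equally at $(P,L_0)$ because $L_0\neq T_PC$. That computation shows $\widetilde X\to U_P$ is étale, and a section of a separated étale morphism is open and closed. So you should establish étaleness of $\widetilde X\to U_P$ before, not after, splitting off that section.
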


\begin{proof}
We work over $k$ and fix $P\in C_{\mathrm{sm}}(k)$. Let $\widetilde C$ be the normalization of $C$, and let $\widetilde P$ be the unique point above $P$. The pencil $\mathcal L_P$ cuts out a linear series on $\widetilde C$ with base point $\widetilde P$. Removing this base point leaves a base-point-free linear series and defines the residual map
\[
\pi_P\colon\widetilde C\longrightarrow\mathcal L_P
\]
of degree $d-1$. Concretely, if $Q\neq P$ is a smooth point of $C$ and $\widetilde Q$ is its lift to $\widetilde C$, then $\pi_P(\widetilde Q)$ is the line $PQ$, while $\pi_P(\widetilde P)$ is the tangent line $T_PC$.

We first check that $\pi_P$ is separable. A point $R\in\bP^2$ is called a \emph{strange point} for $C$ if every tangent line to $C$ at a smooth point passes through $R$. The curve $C$ is called \emph{strange} if it has a strange point. A reflexive curve of degree at least two has no strange point \cite[p.~4]{Hef89}, and projection from a point $R$ is separable on $C$ unless $R$ is a strange point; see the discussion before \cite[Lemma~2.2]{Ent21}. In particular, projection from $P$ is separable. The map $\pi_P$ induces the same extension of function fields, so it is separable as well.

Being separable, $\pi_P$ has only finitely many branch values. A line in $\mathcal L_P$ lies in $U_P$ exactly when it is not a branch value, is not the tangent line $T_PC$, and does not pass through a singular point of $C$. Only finitely many lines are excluded by these conditions, so $U_P$ is nonempty. Since $U_P$ contains no branch values, the restriction of $\pi_P$ over $U_P$ is finite \'etale of degree $d-1$, and the normalization map identifies it with $\phi_P\colon X_P\to U_P$.
\end{proof}

The maps $\phi_P$ and $\pi_P$ induce the same extension of
function fields, so they have the same monodromy group.
We now determine this group at the generic point of $C$.

\begin{lemma}\label{lem:generic-projection-monodromy}
Let $C$ and $k$ be as in Lemma~\ref{lem:incidence-cover}. After base change to $k$, let $\eta$ be the generic point of $C$ and set $K=k(C)$. Viewing $\eta$ as a smooth $K$-rational point of $C\times_k K$, define $\phi_\eta\colon X_\eta\to U_\eta$ by the same construction. Its monodromy group is $S_{d-1}$, both over $K$ and over an algebraic closure $\overline K$.
\end{lemma}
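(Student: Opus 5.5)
Since the geometric monodromy group is a normal subgroup of the arithmetic one, and both sit inside $S_{d-1}$, it suffices to show that the geometric monodromy over $\overline K$ is all of $S_{d-1}$. We may also assume $d\geq 3$, since for $d=2$ the group $S_1$ is trivial. The plan is to show, in order, that the monodromy group $G\subseteq S_{d-1}$ over $\overline K$ is transitive, doubly transitive, and contains a transposition. A doubly transitive (hence primitive) group containing a transposition is the full symmetric group. Throughout, the geometric monodromy is the Galois group of the Galois closure of $\overline K(C)/\overline K(t)$, where $t$ is the coordinate on the pencil $\mathcal L_\eta$.

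\emph{Transitivity.} This is equivalent to irreducibility of $C\times_k\overline K$. That follows from geometric irreducibility of $C$ over $k$, using that $k$ is algebraically closed and irreducibility is preserved by extension of algebraically closed fields.

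\emph{Double transitivity.} The stabilizer of one point $Q$ of a generic fiber must act transitively on the remaining $d-2$ points. Consider the correspondence
\[
I=\{(P,Q,R)\in C^3 : P,Q,R \text{ collinear and distinct}\}.
\]
Double transitivity of projection from the generic point $\eta$ amounts to irreducibility of $I$, or at least of the part of $I$ dominating $C$ via the first factor. The projection $I\to C\times C$, $(P,Q,R)\mapsto(P,Q)$, has fiber $(PQ\cap C)\setminus\{P,Q\}$. Since $(P,Q)$ is generic, the line $PQ$ is a generic line, so this fiber is the monodromy fiber of the generic secant line. By the uniform position principle (the general position theorem in the style of Harris), $I$ is irreducible precisely when the monodromy of general line sections is $S_d$, which in particular is $3$-transitive. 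In positive characteristic this principle holds exactly for reflexive curves, and this is where reflexivity enters (Rathmann; the mechanism also underlies Diem's argument). So the plan is to invoke or reprove the fact that, for $C$ reflexive, the monodromy of the full line-section cover $\{(Q,L):Q\in L\cap C\}\to(\bP^2)^\vee$ is $S_d$. It then follows that its restriction to the pencil through a generic point $\eta$ is the point stabilizer, namely $S_{d-1}$. For this restriction step we would show that fixing the generic point $\eta$ on the generic line corresponds to passing to the function field $K$, and that the monodromy over the pencil coincides with the stabilizer. This holds because the incidence variety $\{(P,L):P\in L\}$ fibers over $C$ with fiber the pencil.

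\emph{Main obstacle.} The hardest step is the $S_d$ monodromy of general line sections in positive characteristic. If a direct reference is not acceptable, we would argue as follows. Reflexivity implies that a general tangent line is simply tangent at one point and meets $C$ transversally elsewhere, so it is not bitangent or a flex tangent. This yields a simple branch point in a general pencil, and hence a transposition in the monodromy. Double transitivity comes from irreducibility of $\{(Q,R,L): Q\neq R\in L\cap C\}$, which is birational to $C\times C$. Bootstrapping this through the point stabilizer gives the $d-1$ case directly: over $\eta$, a general tangent line from $\eta$ (a line $\eta Q$ tangent at $Q$) gives a transposition in $G$, and double transitivity of $G$ follows from irreducibility of triples as above.
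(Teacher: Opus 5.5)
Your restriction step is essentially the paper's first step: $S_d$ monodromy of the full line-section cover, passage to the point stabilizer, then to the generic fiber over $C$. Once fleshed out, it shows that the monodromy of $\phi_\eta$ over $K$ is $S_{d-1}$. The gap is the passage to $\overline K$. You derive double transitivity of $G$, the monodromy over $\overline K$, from irreducibility of the triple correspondence $I\subset C^3$. But $I$ is a variety over $k$, and its irreducibility only shows that the generic fiber $I_\eta$ of $I\to C$ is irreducible as a $K$-scheme. That is double transitivity of the monodromy over $K$. Double transitivity of $G$ needs $I_\eta\times_K\overline K$ to be irreducible. That fails as soon as $k(I)$ contains a nontrivial separable algebraic extension of $K$, for instance if $I\to C$ factors through a nontrivial separable cover $C'\to C$. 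Nothing in your argument excludes this. It is part of what the lemma asserts, and it is exactly the possibility the paper flags when it says the group may become smaller after base change to $\overline K$. Your closing sentence makes the same conflation, attributing double transitivity of $G$ to irreducibility of triples.

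The repair uses the fact you state in your first sentence but never exploit: $G\trianglelefteq S_{d-1}$, where $S_{d-1}$ is now known to be the monodromy over $K$. A single transposition in $G$ then gives all transpositions by conjugation, so $G=S_{d-1}$ and double transitivity of $G$ is never needed. This is the paper's argument.

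The transposition also needs more justification than ``a general tangent line from $\eta$.'' You need a simple tangent $L_0$ that passes through $\bar\eta$ and meets $C$ transversally there. Here a simple tangent has contact order two at one smooth point and is transverse at the other $d-2$ points. The paper obtains $L_0$ by showing that the set $\Theta$ of points at which some simple tangent meets $C$ transversally is dense in $C$. Otherwise $\Theta$ is finite, and since $d\geq 3$ every simple tangent passes through a point of $\Theta$. Then one of these points lies on all tangent lines, so it is a strange point, contradicting reflexivity. Dominance of the incidence variety over $C$ then gives a $\overline K$-point of its generic fiber.

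Finally, the index-two ramification at the tangency point yields a transposition in inertia only because it is tame. Tameness holds since reflexivity forces $\operatorname{char}(k)\neq 2$.
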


\begin{proof}
We keep the notation of the proof of Lemma~\ref{lem:incidence-cover}. The same proof over $K$, with $\eta$ in place of $P$, gives a separable residual map $\pi_\eta$ and a nonempty open set $U_\eta$ over which its restriction $\phi_\eta$ is finite \'etale of degree $d-1$. If $d=2$, then $\pi_\eta$ has degree one and trivial monodromy over both $K$ and $\overline K$. We may assume from now on that $d\geq3$. We first compute the monodromy group over $K$, and then show that it is the same over $\overline K$.

Let $\Omega\subset(\bP^2)^\vee$ be the open set of lines that meet $C$ in $d$ distinct smooth points, and let $V$ be the surface of pairs $(P,L)$ with $L\in\Omega$ and $P\in L\cap C$. The projection $V\to\Omega$ is finite \'etale of degree $d$, and its monodromy group is $S_d$ by results of Rathmann~\cite{Rat87} and of Ballico and Hefez~\cite{BH86}; see \cite[Theorem~2]{Ent21}. 

To describe the remaining intersection points after $P$ has
been marked, consider the cover $V'\to V$ of triples $(P,Q,L)$
with $(P,L)\in V$ and $Q\in(L\cap C)\setminus\{P\}$.
This cover has degree $d-1$. Since $V$ marks one of the $d$
intersection points, the monodromy group of $V'\to V$ is the
stabilizer $S_{d-1}$ of that point in $S_d$, acting on the
remaining $d-1$ points.

To relate this cover to $\phi_\eta$, we take generic fibers
of the projection $V\to C_{\mathrm{sm}}$, $(P,L)\mapsto P$.
Its generic fiber is $U_\eta$, a nonempty open subset of the pencil
$\mathcal L_\eta\cong\bP^1_K$. Over this fiber, the cover
$V'\to V$ parametrizes pairs $(Q,L)$ with $L\in U_\eta$ and
$Q\in(L\cap C)\setminus\{\eta\}$, and sends $(Q,L)$ to $L$.
This is precisely $\phi_\eta\colon X_\eta\to U_\eta$.
Passing to the generic fibers does not change the extension
of function fields, so $k(V')/k(V)$ is the extension induced
by $\phi_\eta$, and hence also by $\pi_\eta$.
It follows that the monodromy group of $\pi_\eta$ over $K$ is $S_{d-1}$.

Since $K$ is not algebraically closed, this group may become smaller after base change to $\overline K$. Write $\bar\eta$ for the resulting $\overline K$-point and $\pi_{\bar\eta}$ for the base-changed map, and let $H$ be its monodromy group. The geometric monodromy group is normal in the monodromy group over $K$, so $H\trianglelefteq S_{d-1}$. We will now show that $H=S_{d-1}$. The transpositions form a single conjugacy class of $S_{d-1}$ and generate it, so it suffices to find one transposition in $H$.

Since $C$ is reflexive, the characteristic is not two, and a general tangent line has contact order two at one smooth point and meets $C$ transversely at $d-2$ further smooth points \cite[Section~2]{Ent21}. Call such a line a \emph{simple tangent}. To obtain the desired transposition, we seek a simple tangent
that meets $C$ transversely at $\bar\eta$.

Let $I$ be the variety of pairs $(R,L)$ where $L$ is a simple tangent and $R$ is a point where $L$ meets $C$ transversely. Let $\Theta\subset C$ be the image of $I\to C$, $(R,L)\mapsto R$. This image is constructible, and we claim that it is dense. If not, then $\Theta$ is finite. Since $d\geq3$, every simple tangent passes through a point of $\Theta$. For each $R\in\Theta$, the set $\{L\in C^\vee:R\in L\}$ is closed in $C^\vee$. These finitely many closed subsets together contain all simple tangents, which form a dense subset of the irreducible curve $C^\vee$, so one of them is all of $C^\vee$. The corresponding point $R$ would then be a strange point of $C$. This is impossible for a reflexive curve, as recalled in the proof of Lemma~\ref{lem:incidence-cover}.

The projection $I\to C$ is therefore dominant, so its generic fiber is nonempty. Since this fiber is of finite type over $K$, it has a $\overline K$-point, giving a simple tangent $L_0$ that meets $C$ transversely at $\bar\eta$. The fiber of $\pi_{\bar\eta}$ over $L_0$ consists of the point of tangency, with ramification index two, and $d-3$ unramified points. This ramification is tame because $\operatorname{char}(k)\neq2$, and it yields a transposition in $H$, which forces $H=S_{d-1}$. The same holds for $\phi_\eta$, since restricting $\pi_\eta$ to $U_\eta$ does not change its monodromy group.
\end{proof}

The next lemma carries this conclusion from the generic point to every smooth point of $C$ outside a finite set, whose size we bound in terms of the degree.

\begin{lemma}\label{lem:projection-monodromy}
Let $C\subset\bP^2$ be a geometrically irreducible reflexive curve of degree $d\geq2$ defined over $\Fq$. There exists a set $B_C\subset C_{\mathrm{sm}}(\overline{\Fq})$, with $|B_C|=O_d(1)$, such that the following holds for every
$P\in C_{\mathrm{sm}}(\overline{\Fq})\setminus B_C$.

The map $\phi_P\colon X_P\to U_P$ is finite \'etale of degree $d-1$, and its geometric monodromy group is $S_{d-1}$. If $P\in C(\Fq)$, then the arithmetic monodromy group of $\phi_P$ is also $S_{d-1}$. Moreover, the complexity of $\phi_P$ is $O_d(1)$, uniformly in $C$, $q$, and $P$.
\end{lemma}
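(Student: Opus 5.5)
The plan is to spread the generic statement of Lemma~\ref{lem:generic-projection-monodromy} to almost all smooth points, with effective control on the exceptional set. The finite étale part for every $P\in C_{\mathrm{sm}}(\overline{\Fq})$ is already Lemma~\ref{lem:incidence-cover}, so the content is monodromy and complexity.

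For the geometric monodromy, consider the family of residual maps $\pi_P$ as $P$ varies. Over $C_{\mathrm{sm}}$ we have the surface $V$ of pairs $(P,L)$ with $P\in L\cap C$. Take its partial compactification $W=\{(P,L):P\in C_{\mathrm{sm}},\ P\in L\}$, a $\bP^1$-bundle over $C_{\mathrm{sm}}$. The incidence $W'=\{(P,Q,L)\}$ of triples with $Q\in(L\cap C)\setminus\{P\}$, taken with its closure and the residual divisor, is finite of degree $d-1$ over $W$ on a dense open set. By Lemma~\ref{lem:generic-projection-monodromy} the generic fibre of $W'\to W\to C_{\mathrm{sm}}$ has geometric monodromy $S_{d-1}$.

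The key step is a specialization argument. For a cover of curves in a family, geometric monodromy is lower semicontinuous in the sense that it can only drop on a constructible set. I would make this explicit via a Galois closure. Let $\widetilde W\to W$ be the Galois closure of $W'\to W$ with group $S_{d-1}$, obtained as the complement of the big diagonal in the $(d-1)$-fold fibre product. Geometric monodromy $S_{d-1}$ at $P$ is then equivalent to the fibre of $\widetilde W\to C_{\mathrm{sm}}$ over $P$ being geometrically irreducible. Generic geometric irreducibility of the fibres of a morphism spreads out to an open set. The complement of that open set is a proper closed subset of $C$, hence finite, and its cardinality should be bounded in terms of degrees of defining equations, which are $O_d(1)$ since $\widetilde W$ has complexity $O_d(1)$. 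This is done via effective Bertini/Noether-type results or a bounded-degree constructibility statement.

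I would add to $B_C$ the finitely many singular-adjacent bad points: the flexes and bitangency points, at most $O(d^2)$ by Plücker-type bounds.

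I expect the uniformity of $|B_C|$ to be the main obstacle, since abstract constructibility gives finiteness but not a bound. As a fallback I would use the transposition criterion from Lemma~\ref{lem:generic-projection-monodromy} pointwise. Alongside it, use that $\operatorname{Mon}^{\mathrm g}(\phi_P)$ is transitive because $C$ is irreducible, and that it contains a transposition whenever some simple tangent meets $C$ transversely at $P$ with tame ramification. Further, a transitive subgroup of $S_{d-1}$ generated by transpositions is all of $S_{d-1}$. I would show $\operatorname{Mon}^{\mathrm g}$ is generated by inertia, which are transpositions at simple branch points, because $\bP^1$ has no nontrivial unramified covers over $\overline{\Fq}$ for tame covers. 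The bad $P$ are then those for which the pencil through $P$ contains a non-simple branch line, or wild ramification occurs. These are cut out by intersecting $C$ with explicit curves of degree $O_d(1)$, such as the dual of flex or bitangent loci and lines through singular points, so Bézout bounds them by $O_d(1)$. Wild ramification is excluded by reflexivity, since contact order two forces $e=2$ and $\operatorname{char}\neq 2$.

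The arithmetic statement is then formal. If $P\in C(\Fq)$, then $\operatorname{Mon}^{\mathrm g}\trianglelefteq\operatorname{Mon}\le S_{d-1}$ and $\operatorname{Mon}^{\mathrm g}=S_{d-1}$, so equality holds and $\nu=1$.

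For complexity, the graph of $\phi_P$ is defined inside $\bP^2\times\bP^1$ (Segre embedded) by the equation $F$ of $C$, the incidence $Q\in L$, and $Q\neq P$. The set $U_P$ is defined by the nonvanishing of the discriminant of $F$ restricted to $L$ and the avoidance conditions. All of these are polynomials of degree $O_d(1)$ in the coordinates, so $\operatorname{comp}(\phi_P)=O_d(1)$ uniformly.
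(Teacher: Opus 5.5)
Up to the finiteness of $B_C$, your main line matches the paper. You pass to the ordered cover, equate ``geometric monodromy $S_{d-1}$ at $P$'' with geometric irreducibility of its fibre over $P$, and spread this out from the generic fibre using Lemma~\ref{lem:generic-projection-monodromy}. The arithmetic statement is handled as in the paper. The gap is the uniform bound $|B_C|=O_d(1)$, which is part of the statement. You only name ``effective Bertini/Noether-type results or a bounded-degree constructibility statement'', and you assert that abstract constructibility gives finiteness but no bound. That assertion is where the key idea is missed: abstract constructibility \emph{does} give a bound once the curve is allowed to vary.

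The paper takes the projective space $\mathcal P$ over $\mathbb Z$ of degree-$d$ plane curves and the universal family $\mathcal U\to\mathcal P$ of pairs $(C,P)$ with $P$ smooth on $C$. It builds the ordered cover in this family and lets $\mathcal E\subset\mathcal U$ be the constructible locus where the fibre is not geometrically irreducible. Write $\mathcal E$ as a finite union of locally closed pieces $\mathcal E_i$. Each point of the finite set $B_C$ is isolated in its fibre, so it lies in the quasi-finite locus of $\mathcal E_i\to\mathcal P$. That locus is open, hence quasi-compact, and a quasi-finite morphism of finite type has uniformly bounded geometric fibre cardinality. Since everything lives over $\mathbb Z$ with only $d$ fixed, the bound is independent of $C$, $q$ and the characteristic. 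An effective Noether-forms argument might also work, but you have not supplied one.

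Your fallback does not close this gap. For smooth $C$ it essentially works. The bad $P$ lie on the $O_d(1)$ non-simple tangent lines. Since $\bP^1_{\overline{\Fq}}$ is simply connected, the inertia groups generate the monodromy group, and these are all transpositions, so you get a transitive group generated by transpositions.

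For singular $C$, however, $\pi_P$ ramifies at a singular branch over the line $PS$ for \emph{every} $P$, so your criterion can exclude all of $C$. For example, over $\Fq$ with $p\geq5$, the quartic $y^3z=x^4$ is reflexive because $d<p$. It has a unibranch triple point $S=[0:0:1]$ with branch tangent $y=0$, and no smooth point lies on $y=0$. Hence for every smooth $P$ the line $PS$ meets that branch with multiplicity exactly $3$. This gives $\pi_P^*(PS)=3\widetilde S$ and $3$-cycle inertia, so your bad set is all of $C_{\mathrm{sm}}$.

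Likewise, reflexivity only controls contact orders of tangent lines at smooth points, so it does not exclude wild ramification. A reflexive curve can have a unibranch singular point whose multiplicity is divisible by $p$; an example for $p\geq3$ is the rational curve $t\mapsto[t^p+t^{p+1}:t^{p+2}:1]$. Such a point gives wild inertia over $PS$ for every $P$.

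Once some inertia groups are not generated by transpositions, transitivity together with some transpositions no longer forces $S_{d-1}$ without a separate primitivity argument. This is the singular-model phenomenon the paper flags after Proposition~\ref{prop:degree}. It is why the paper avoids a pointwise ramification analysis and uses the family-wide constructibility argument instead.
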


\begin{proof}
Work over $k=\overline{\Fq}$ and put $K=k(C)$. We first show that the monodromy group of $\phi_P$ is $S_{d-1}$ for all but finitely many $P\in C_{\mathrm{sm}}(k)$. We express this condition as the irreducibility of an auxiliary cover, because geometric irreducibility passes from the generic fiber to the fibers over an open set.

For $P\in C_{\mathrm{sm}}(k)$, let $W_P\to U_P$ be the cover obtained by ordering the $d-1$ points in each fiber of the finite \'etale map $\phi_P$ from Lemma~\ref{lem:incidence-cover}. The monodromy group $G_P$ of $\phi_P$ acts freely on the $(d-1)!$ orderings of a fiber, and the irreducible components of $W_P$ correspond to the $(d-1)!/|G_P|$ orbits. In particular, $W_P$ is irreducible exactly when $G_P=S_{d-1}$.

Let $W$ be the variety of tuples $(P,Q_1,\ldots,Q_{d-1},L)$ in which $L$ meets $C$ in the $d$ distinct smooth points $P,Q_1,\ldots,Q_{d-1}$. The fiber of the projection
\[
W\longrightarrow C_{\mathrm{sm}},
\qquad (P,Q_1,\ldots,Q_{d-1},L)\longmapsto P
\]
over $P$ is $W_P$, and its generic fiber is the ordered cover $W_\eta$ attached to $\phi_\eta$. By Lemma~\ref{lem:generic-projection-monodromy} and the criterion above, applied over $\overline K$, the generic fiber $W_\eta$ is geometrically irreducible. This property extends to the fibers over a dense open subset of $C_{\mathrm{sm}}$ \cite[Tag~0559]{Stacks}. The complement of a dense open subset of a curve is finite, so the set
\[
B_C=\{P\in C_{\mathrm{sm}}(k):W_P\text{ is not irreducible}\}
\]
is finite. By the criterion above, the monodromy group of $\phi_P$ is $S_{d-1}$ for every $P\in C_{\mathrm{sm}}(k)\setminus B_C$. When $d=2$, the cover $W_P\to U_P$ has degree one for every $P$, so $B_C=\varnothing$.

It remains to bound $|B_C|$ in terms of $d$ alone, and for this we let the curve vary. Let $\mathcal P$ be the projective space over $\mathbb Z$ parametrizing plane curves of degree $d$, and let $\mathcal U\to\mathcal P$ be the universal family of pairs $(C,P)$ with $P$ smooth on $C$. The construction of $W$ gives a family of finite type over $\mathcal U$ whose fiber over $(C,P)$ is $W_P$. Let $\mathcal E\subset\mathcal U$ be the set of points over which this fiber is not geometrically irreducible. This set is constructible \cite[Tag~055B]{Stacks}, and for a curve $C$ as in the lemma its geometric fiber over $[C]\in\mathcal P$ is $B_C$.

Write $\mathcal E$ as a finite union of locally closed subschemes $\mathcal E_1,\ldots,\mathcal E_s$. For each $i$, the set of points at which $\mathcal E_i\to\mathcal P$ is quasi-finite is open in $\mathcal E_i$ \cite[Tag~01TI]{Stacks}. For a curve $C$ as in the lemma, the geometric fiber of each $\mathcal E_i$ over $[C]$ is a subset of the finite set $B_C$. Each point is isolated in that fiber and therefore lies in the quasi-finite locus. This open locus is quasi-compact because $\mathcal E_i$ is Noetherian, so there is an integer $N_i$ bounding the number of points in each of its geometric fibers over $\mathcal P$ \cite[Tag~03JA]{Stacks}. We obtain $|B_C|\leq N_1+\cdots+N_s$. Since the whole construction takes place over $\mathbb Z$ for fixed $d$, the integers $s$ and $N_i$ depend only on $d$, which gives $|B_C|=O_d(1)$.

Now suppose that $P\in C_{\mathrm{sm}}(\Fq)\setminus B_C$. Then $\phi_P$ is defined over $\Fq$, and its arithmetic monodromy group is a subgroup of $S_{d-1}$ that contains the geometric monodromy group. We have just seen that the latter is all of $S_{d-1}$, so the two groups coincide.

It remains to bound the complexity of $\phi_P$, and here $P$ may be any smooth point of $C$. Choose projective coordinates with $P=[0:1:0]$ and $T_PC=\{z=0\}$. If $F=0$ defines $C$, these coordinate choices force the coefficients of $y^d$ and $xy^{d-1}$ in $F$ to vanish, while the coefficient of $zy^{d-1}$ is nonzero. After scaling $F$, the polynomial $f(x,y)=F(x,y,1)$ is therefore monic of degree $d-1$ in $y$.

Let $D(x)=\operatorname{Disc}_y(f)(x)$ be the discriminant of $f$ with respect to $y$. The lines of the pencil other than $T_PC$ have equations $x=az$. Such a line lies in $U_P$ exactly when $f(a,y)$ has $d-1$ distinct roots, or equivalently when $D(a)\neq0$; at each root, $f_y\neq0$, so the corresponding point of $C$ is smooth. In these coordinates, $\phi_P$ is the projection $(x,y)\mapsto x$ from $\{f=0,\ D\neq0\}$ to $\{D\neq0\}$. The degrees of $f$ and $D$ are bounded in terms of $d$, so the graph of $\phi_P$ has complexity $O_d(1)$, uniformly in $C$, $q$, and $P$.
\end{proof}

We can now apply Entin's theorem to count the completely split fibers of $\phi_P$. These fibers correspond precisely to the $\Fq$-lines through $P$ that meet $C$ in $d$ distinct smooth $\Fq$-points.

\begin{proposition}\label{prop:degree}
Let $C$ and $B_C$ be as in Lemma~\ref{lem:projection-monodromy}, and let
$P\in C_{\mathrm{sm}}(\Fq)\setminus B_C$. The number of $\Fq$-lines
through $P$ that meet $C$ in $d$ distinct smooth $\Fq$-points is
\[
\frac{q}{(d-1)!}+O_d(\sqrt q).
\]
The implied constant is uniform in $C$, $q$, and $P$.
\end{proposition}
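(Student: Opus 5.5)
The plan is to apply Theorem~\ref{thm:entin-chebotarev} to the finite \'etale cover $\phi_P\colon X_P\to U_P$ with the identity conjugacy class, and then to account for the finitely many lines through $P$ that lie outside $U_P$. By Lemma~\ref{lem:projection-monodromy}, since $P\in C_{\mathrm{sm}}(\Fq)\setminus B_C$, the map $\phi_P$ is finite \'etale of degree $d-1$, defined over $\Fq$. Its geometric and arithmetic monodromy groups both equal $S_{d-1}$, and its complexity is $O_d(1)$ uniformly in $C$, $q$, $P$. Since the geometric and arithmetic monodromy groups coincide, the exact sequence~\eqref{eq:monodromy-exact-sequence} gives $\nu=1$. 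The identity class $\mathcal C=\{1\}$ then maps to the (trivial) Frobenius generator of $\operatorname{Gal}(\bF_{q}/\Fq)$, so the hypothesis of Theorem~\ref{thm:entin-chebotarev} is satisfied. The base $U_P$ is an open subset of $\mathcal L_P\cong\bP^1$, hence normal, geometrically irreducible and of dimension one.

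Theorem~\ref{thm:entin-chebotarev} then gives
\[
\bigl|\{L\in U_P(\Fq):\operatorname{Fr}_{\phi_P}(L)=1\}\bigr| = \frac{q}{(d-1)!}+O_d(\sqrt q),
\]
where the implied constant depends only on $\operatorname{comp}(\phi_P)=O_d(1)$. It is therefore uniform in $C$, $q$ and $P$.

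Next I will match this count with the quantity in the proposition. An $\Fq$-line $L$ through $P$ meets $C$ in $d$ distinct smooth $\Fq$-points exactly when two conditions hold. First, $L\cap C$ consists of $d$ distinct smooth geometric points, i.e.\ $L\in U_P(\Fq)$. Second, the $d-1$ points of $(L\cap C)\setminus\{P\}$ are all $\Fq$-rational, i.e.\ the Frobenius permutation of the fiber $\phi_P^{-1}(L)$ is trivial. Lines through $P$ outside $U_P$ never qualify, because they fail the distinctness or smoothness condition by definition. Hence the count is exactly the number of $L\in U_P(\Fq)$ with trivial Frobenius class.

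One point needs care. A rational line through $P$ is indeed an $\Fq$-point of the pencil, since $P$ is rational. Also, $\Fq$-rationality of the fiber points is equivalent to trivial Frobenius action on the fiber, because the fiber is reduced of size $d-1$. So the asymptotic formula above is the claimed one.

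The main obstacle is really just verifying the hypotheses of Entin's theorem uniformly; the substantive work is already done in Lemma~\ref{lem:projection-monodromy}. The step needing most care is the uniformity of the error term. Entin's implied constant depends only on the complexity, which I will take from the explicit model $\{f=0,\,D\neq0\}\to\{D\neq0\}$ in the proof of Lemma~\ref{lem:projection-monodromy}. That model is obtained after an $\Fq$-rational change of coordinates sending $P$ to $[0:1:0]$ and $T_PC$ to $z=0$. This change of coordinates is available because $P$ and $T_PC$ are defined over $\Fq$, and it does not alter the counting problem. The case $d=2$ is trivial: $S_1$ is trivial, and the count is $q+O(1)$, namely all rational lines through $P$ except the boundedly many outside $U_P$. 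This agrees with the general formula.
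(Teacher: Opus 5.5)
Your proposal is correct and follows the paper's proof. You apply Entin's theorem to $\phi_P$ with the identity class, use Lemma~\ref{lem:projection-monodromy} to get $\nu=1$ and $O_d(1)$ complexity, and identify the trivial-Frobenius lines in $U_P(\Fq)$ with the desired lines; your remarks that lines outside $U_P$ never qualify and that the bounded-complexity model can be taken over $\Fq$ (since $P$ and $T_PC$ are rational) make explicit points the paper leaves implicit.
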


\begin{proof}
Apply Theorem~\ref{thm:entin-chebotarev} to the finite \'etale morphism
$\phi_P\colon X_P\to U_P$. By Lemma~\ref{lem:projection-monodromy}, both the geometric and arithmetic monodromy groups of $\phi_P$ are $S_{d-1}$. The exact sequence~\eqref{eq:monodromy-exact-sequence} then gives $\nu=1$, and the identity conjugacy class satisfies the hypothesis of Entin's theorem.

A line $L\in U_P(\Fq)$ has Frobenius class equal to the identity precisely when every point of the fiber $\phi_P^{-1}(L)$ is defined over $\Fq$. Since this fiber is $(L\cap C)\setminus\{P\}$, that happens exactly when $L$ meets $C$ in $d$ distinct smooth $\Fq$-points.

The identity conjugacy class has size one, while $|S_{d-1}|=(d-1)!$. Since $U_P$ is a curve and the complexity of $\phi_P$ is $O_d(1)$, Theorem~\ref{thm:entin-chebotarev} gives
\[
\frac{q}{(d-1)!}+O_d(\sqrt q)
\]
such $\Fq$-lines. The complexity bound is uniform in $C$, $q$, and $P$, so the same is true of the implied constant.
\end{proof}

A closely related estimate is stated in
\cite[Proposition~10]{Die12}, under the hypothesis that only
ordinary tangents pass through $P$. For singular plane models, projection may ramify at a branch even when the corresponding line is not tangent to that branch. We have given an independent argument in the proofs of
Lemmas~\ref{lem:generic-projection-monodromy} and~\ref{lem:projection-monodromy} that accounts for this
possibility and yields the uniform estimate used in
Section~\ref{sec:main-thm}.

\section{Proof of Theorem~\ref{thm:main}}\label{sec:main-thm}

We first dispose of the case in which $C$ is not geometrically irreducible. As observed in the introduction, in this case
\[
|C(\Fq)|\leq \frac{d^2}{4}.
\]
These points can be covered by at most $d^2/4$ $\Fq$-lines, and $d^2/4 \le \left(\frac{1}{d}+\varepsilon\right)q$ after enlarging $M_{d,\varepsilon}$ if necessary. We may therefore assume that $C$ is geometrically irreducible. By the hypothesis of Theorem~\ref{thm:main}, the curve $C$ is then reflexive.

We now complete the proof by combining the estimate from Proposition~\ref{prop:degree} with the Pippenger--Spencer theorem~\cite{PS89} on covers of nearly regular hypergraphs.

A \emph{cover} of a hypergraph is a set of edges whose union contains all vertices. Here $d(v)$ denotes the degree of a vertex $v$, and $d(v,w)$ the number of edges containing both $v$ and $w$. We use the following form of the Pippenger--Spencer theorem, taken from Alon and Spencer~\cite[Theorem~4.7.1]{AS16}.

\begin{theorem}\label{thm:pippenger-spencer}
For every integer $r \geq 2$ and all reals $k \geq 1$ and $a > 0$, there are $\gamma=\gamma(r,k,a)>0$ and
$d_0=d_0(r,k,a)$ such that for every $n\in\mathbb{N}$ and $D \geq d_0$ the following holds.

Let $H=(V,E)$ be an $r$-uniform hypergraph on $n$ vertices in which every vertex has positive degree. Suppose that
\begin{enumerate}
    \item all but at most $\gamma n$ vertices $v\in V$ satisfy $(1-\gamma)D\leq d(v)\leq (1+\gamma)D$;
    \item every vertex $v\in V$ satisfies $d(v)<kD$;
    \item every pair of distinct vertices $v,w\in V$ satisfies $d(v,w)<\gamma D$.
\end{enumerate}
Then $H$ has a cover consisting of at most $(1+a)n/r$ edges.
\end{theorem}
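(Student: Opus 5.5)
This is the Pippenger--Spencer theorem, quoted from Alon and Spencer~\cite[Theorem~4.7.1]{AS16}. If one wants a proof rather than a citation, the natural route is the R\"odl nibble, which I would set up as follows.

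The idea is to build the cover in many small random rounds. Fix a small $\theta>0$ depending on $a$, and a number of rounds $t$ with $e^{-t\theta}$ small compared to $a$. In the first round, select each edge of $H$ independently with probability $\theta/D$. Add the selected edges to the cover, delete every vertex they cover, and keep the induced hypergraph on the survivors, meaning the edges lying entirely inside the surviving vertex set.

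For a typical vertex $v$, with $d(v)\approx D$:
\begin{itemize}
\item the probability that $v$ is covered is about $1-e^{-\theta}$;
\item an edge through $v$ survives when its other $r-1$ vertices all survive, which happens with probability about $e^{-(r-1)\theta}$, because the codegree condition $d(v,w)<\gamma D$ makes these events nearly independent.
\end{itemize}
So after one round we expect
\begin{itemize}
\item about $n_1\approx e^{-\theta}n$ surviving vertices;
\item a new degree parameter $D_1\approx e^{-(r-1)\theta}D$;
\item codegrees still below $\gamma D \ll D_1$.
\end{itemize}
The expected number of edges used is $\theta\,|E|/D\approx \theta n/r$, using $\sum_v d(v)=r|E|$. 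The vertices of atypical degree contribute only $O(\gamma k n)$ to this count, thanks to the bound $d(v)<kD$.

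The key step is concentration. I need to show that, with positive probability:
\begin{itemize}
\item all but a $\gamma_1$-fraction of the survivors have degree within a factor $1\pm\gamma_1$ of $D_1$;
\item the number of chosen edges is at most $(1+\gamma_1)\theta n/r$.
\end{itemize}
Here $\gamma_1$ is only slightly larger than $\gamma$, provided $\gamma$ is small and $D\ge d_0$. I would prove this by a second-moment (Chebyshev) computation on the degree of each vertex. The variance is controlled by pairs of edges through $v$ that share a further vertex, and the number of such pairs is at most $d(v)\cdot r\gamma D$, which is small relative to $D^2$. This bounds the number of bad vertices in expectation, and Markov's inequality then turns that into a bound holding with positive probability. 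The remaining hypotheses are preserved automatically: the maximum-degree condition survives since degrees only drop, and codegrees only drop while $D_i$ stays above $e^{-rt\theta}D$.

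Iterating $t$ times, I then need the error parameters $\gamma_0=\gamma<\gamma_1<\dots<\gamma_t$ to stay small. This is why $\gamma$ is chosen depending on $(r,k,a)$, and $d_0$ is chosen large enough that $D_t$ is still large enough for the concentration estimates. Summing over rounds, the cover uses at most about
\[
\sum_i (1+\gamma_i)\frac{\theta n e^{-i\theta}}{r}\le (1+a/2)\frac{n}{r}
\]
edges, since $\sum_i\theta e^{-i\theta}\le 1+O(\theta)$. At most about $e^{-t\theta}n\le an/(2r)$ vertices remain uncovered, and I cover each of them by one arbitrary edge, which exists because every vertex has positive degree. The total is at most $(1+a)n/r$.

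The main obstacle is the bookkeeping that keeps the error $\gamma_i$ from blowing up over $t$ rounds. Because $t$ depends only on $a$ and $\theta$, a fixed blow-up factor per round is acceptable, as long as $\gamma$ is chosen small enough at the start.
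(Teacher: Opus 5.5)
Your proposal agrees with the paper, which gives no proof of this statement and simply cites Alon--Spencer, Theorem~4.7.1; your R\"odl-nibble outline is the standard argument behind that citation. One point to correct if you write it out: pairs of edges through $v$ that meet only at $v$ are still correlated, through the fewer than $(r-1)^2\gamma D$ edges meeting both, and these pairs add another $O(\gamma D^2)$ to the variance of the new degree of $v$. As a result, Chebyshev makes $\gamma_1$ something like $\gamma^{1/3}$ rather than a constant multiple of $\gamma$, which is harmless because, with a fixed number $t$ of rounds, you can choose the $\gamma_i$ backwards from the target $\gamma_t$.
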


Let $S\subset C(\Fq)$ be the set of singular $\Fq$-points, and let $B_C$ be the exceptional set from Lemma~\ref{lem:projection-monodromy}. Both $|S|$ and $|B_C|$ are bounded in terms of $d$, so the set
\[
T=S\cup\bigl(B_C\cap C(\Fq)\bigr)
\]
has size $O_d(1)$. We first cover the points of $T$: each lies on some $\Fq$-line, so at most $|T|=O_d(1)$ lines suffice. It remains to cover the points of $C(\Fq)\setminus T$.

To apply Theorem~\ref{thm:pippenger-spencer}, we describe the hypergraph $H=(V,E)$. The vertex set is $V=C(\Fq)\setminus T$, and the edges are the sets of $d$ distinct collinear points of $V$. Equivalently, each edge arises from an $\Fq$-line meeting $C$ in $d$ distinct smooth $\Fq$-points, none of which lies in $T$. Thus, $H$ is a $d$-uniform hypergraph.

We compute the degree of a vertex $v\in V$, which corresponds to a point $P\in C(\Fq)\setminus T$. By Proposition~\ref{prop:degree}, the number of $\Fq$-lines through $P$ that meet $C$ in $d$ distinct smooth $\Fq$-points is
\[
\frac{q}{(d-1)!}+O_d(\sqrt q).
\]
Among these lines, at most $|T|=O_d(1)$ of them contain a point of $T$, since each point of $T$ determines a unique line through $P$. After these lines are discarded, the remaining lines meet $C$ in $d$
distinct smooth $\Fq$-points of $V$, and each resulting set of points
is an edge of $H$ containing $v$. We obtain
\begin{equation}\label{eq:degree-of-a-vertex}
d(v)=\frac{q}{(d-1)!}+O_d(\sqrt q)
\end{equation}
for every $v\in V$. In particular, every vertex has positive degree when $q$ is sufficiently large.

We apply Theorem~\ref{thm:pippenger-spencer} with the following parameters. Fix an arbitrary $a>0$, and set $r=d$ and $k=2(d-1)!$. With these choices, $d_0(r,k,a)$ is a constant, so $D=\frac{q}{(d-1)!}$ exceeds $d_0$ once $q$ is sufficiently large; we take this value of $D$. Finally, the Aubry--Perret bound~\cite{AP96} gives $|C(\Fq)|=q+O_d(\sqrt q)$, so $n=|V|=|C(\Fq)|-|T|=q+O_d(\sqrt q)$.

We now verify the three conditions. Here $\gamma=\gamma(r,k,a)$ and $d_0=d_0(r,k,a)$ denote the constants supplied by Theorem~\ref{thm:pippenger-spencer}.
\begin{enumerate}
    \item Using~\eqref{eq:degree-of-a-vertex}, we obtain
    \[
    \frac{q}{(d-1)!}\left(1-O_d\left(\frac{1}{\sqrt{q}}\right)\right)
    \leq d(v)\leq
    \frac{q}{(d-1)!}\left(1+O_d\left(\frac{1}{\sqrt{q}}\right)\right).
    \]
    Since $\gamma>0$ is a fixed constant and $D=\frac{q}{(d-1)!}$, this gives $(1-\gamma)D\leq d(v)\leq(1+\gamma)D$ for sufficiently large $q$. This holds for every vertex of $H$, so the exceptional set allowed in condition~(1) is in fact empty.
    \item Every point of $\bP^2(\Fq)$ lies on exactly $q+1$ distinct $\Fq$-lines, so $d(v)\leq q+1$ for every $v$. Since
    \[
    kD=2(d-1)!\cdot\frac{q}{(d-1)!}=2q,
    \]
    we have $d(v)\leq q+1<2q=kD$ for every $v\in V$.
    \item Two distinct $\Fq$-points lie on a unique $\Fq$-line, and any edge containing both $v$ and $w$ consists of $d$ collinear points on that line. Hence at most one edge contains both of them, so $d(v,w)\leq 1$. Since $\gamma D=\gamma\frac{q}{(d-1)!}\to\infty$, we have $d(v,w)<\gamma D$ for all sufficiently large $q$.
\end{enumerate}
By Theorem~\ref{thm:pippenger-spencer}, the hypergraph $H$ has a cover
consisting of at most
\[
(1+a)\frac{n}{d}
\]
edges. Since $n=|V|=q+O_d(\sqrt q)$, there is a constant $K_d$ such that $n\leq q+K_d\sqrt q$ for all sufficiently large $q$. The number of edges in the cover is at most
\[
(1+a)\frac{q+K_d\sqrt q}{d}
=
\frac{q}{d}+\frac{aq}{d}+\frac{(1+a)K_d}{d}\sqrt q.
\]
Let $R_d$ be a constant such that the points of $T$ can be covered using at most $R_d$ additional $\Fq$-lines. Recall that $a>0$ was arbitrary. Given $\varepsilon>0$, take $a$ small enough that $a/d<\varepsilon/3$, and then $q$ large enough that
\[
\frac{(1+a)K_d}{d}\sqrt q<\frac{\varepsilon q}{3}
\qquad\text{and}\qquad
R_d<\frac{\varepsilon q}{3}.
\]
Combining the hypergraph cover with the additional lines used to cover $T$, the total number of $\Fq$-lines is at most
\[
\frac{q}{d}+\frac{\varepsilon q}{3}+\frac{\varepsilon q}{3}+\frac{\varepsilon q}{3}
=
\left(\frac{1}{d}+\varepsilon\right)q.
\]
This proves Theorem~\ref{thm:main}.

\section{An obstruction in every positive characteristic}\label{sec:obstruction}

In this section, we show that the reflexivity hypothesis in
Theorem~\ref{thm:main} cannot be omitted in any positive characteristic. For every prime $p$, we exhibit a geometrically irreducible nonreflexive curve of degree $p^2$, defined over $\Fp$, whose covering number over $\bF_q$, for $q=p^{2n+1}$, is $q/p+1$. These curves fall outside Theorem~\ref{thm:main} because they are nonreflexive, and outside Corollary~\ref{cor:large-characteristic} because $p^2>p$.

The obstruction is visible in the proof itself. The application of Entin's theorem in Section~\ref{sec:projection-monodromy} depends on the identity conjugacy class satisfying the compatibility condition in Theorem~\ref{thm:entin-chebotarev}. This condition holds exactly when $\nu=1$, and in general one may have $\nu>1$. The following example shows what can go wrong in every positive characteristic.

\begin{proposition}\label{prop:nonreflexive-obstruction}
Let $p$ be a prime, let $q=p^{2n+1}$, where $n\geq 0$, and let
$C\subset\bP^2$ be the curve of degree $p^2$ given by
\[
yz^{p^2-1}=x^{p^2}.
\]
Then $C$ is geometrically irreducible and nonreflexive, and
$|C(\Fq)|=q+1$, but every $\Fq$-line contains at most $p$ points of
$C(\Fq)$. Moreover,
\[
\operatorname{cov}(C)
=
\left\lceil\frac{q+1}{p}\right\rceil
=
\frac{q}{p}+1.
\]
\end{proposition}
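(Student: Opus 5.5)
The plan is to verify each claim directly from the explicit equation. Write $F=yz^{p^2-1}-x^{p^2}$. On the affine chart $z=1$ the curve is the graph $y=x^{p^2}$. Setting $z=0$ forces $x=0$, so $[0:1:0]$ is the only point at infinity. Since $y-x^{p^2}$ is linear in $y$ and is not divisible by any nonconstant polynomial in $x$, it is irreducible over $\overline{\Fq}$, so $C$ is geometrically irreducible. It is also rational, parametrized by $t\mapsto[t:t^{p^2}:1]$. For nonreflexivity, compute the partial derivatives: $F_x=0$, $F_y=z^{p^2-1}$ and $F_z=-yz^{p^2-2}$. At the affine point $(t,t^{p^2})$ the tangent line is therefore $y=t^{p^2}z$. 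In terms of the parameter, the Gauss map is $t\mapsto[0:1:-t^{p^2}]$, a $p^2$-th power map, hence inseparable. The count $|C(\Fq)|=q+1$ follows immediately: there are $q$ affine points $(t,t^{p^2})$ with $t\in\Fq$, plus $[0:1:0]$.

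Next I bound the number of points on each $\Fq$-line. The line $z=0$ contains only $[0:1:0]$. A line $x=cz$ contains exactly two points, $(c,c^{p^2})$ and $[0:1:0]$, and $2\le p$. For a line $y=mx+b$, the affine points correspond to the roots in $\Fq$ of $x^{p^2}-mx=b$. The map $\lambda_m\colon x\mapsto x^{p^2}-mx$ is additive, so the root set in $\Fq$ is either empty or a coset of $K_m\cap\Fq$, where $K_m=\{x\in\overline{\Fq}:x^{p^2}=mx\}$.

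The key point is that $K_m$ is an $\bF_{p^2}$-vector space, because $(\lambda x)^{p^2}=\lambda x^{p^2}$ for $\lambda\in\bF_{p^2}$. It has at most $p^2$ elements, so $K_m=\bF_{p^2}x_0$ for some $x_0$, or $K_m=0$. If both $x_0$ and $\lambda x_0$ are nonzero and lie in $\Fq$, then $\lambda\in\Fq\cap\bF_{p^2}=\Fp$, since $q=p^{2n+1}$ is an odd power of $p$. Hence $|K_m\cap\Fq|\le p$. Lines $y=mx+b$ do not pass through $[0:1:0]$, so every $\Fq$-line contains at most $p$ points of $C(\Fq)$.

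This gives the lower bound $\operatorname{cov}(C)\ge\lceil (q+1)/p\rceil=q/p+1$, because $p\mid q$. For the matching construction, take $m=1$. Then $K_1\cap\Fq=\Fp$, so the $\Fp$-linear map $x\mapsto x^{p^2}-x$ on $\Fq$ has kernel $\Fp$ and image $J$ of size $q/p$. Its fibers partition $\Fq$ into $q/p$ cosets of size $p$. Consequently the $q/p$ lines $y=x+bz$ with $b\in J$ cover all affine points of $C(\Fq)$. One further line, for example $z=0$, covers $[0:1:0]$, which gives exactly $q/p+1$ lines. The only step I expect to need care is the intersection $\Fq\cap\bF_{p^2}=\Fp$, which is where the parity of the exponent $2n+1$ is used. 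Everything else is routine.
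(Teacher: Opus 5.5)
Your proof is correct and follows essentially the same route as the paper: irreducibility from linearity in $y$, the explicit point count, a bound on the kernel of the additive polynomial $x^{p^2}-mx$ restricted to $\Fq$, and the cover by the lines $y=x+bz$, indexed by the image of $s\mapsto s^{p^2}-s$, together with $z=0$. The only differences are minor: you check nonreflexivity directly from the inseparable Gauss map $t\mapsto[0:1:-t^{p^2}]$ rather than via the strange point $[1:0:0]$, and you bound the kernel using its $\bF_{p^2}$-vector space structure and $\Fq\cap\bF_{p^2}=\Fp$ rather than the computation $\gcd(p^2-1,q-1)=p-1$, which amounts to the same fact.
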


\begin{proof}
Regard $yz^{p^2-1}-x^{p^2}$ as a polynomial in $y$ with coefficients in
$\overline{\Fq}[x,z]$. It is linear in $y$, and its coefficients
$z^{p^2-1}$ and $x^{p^2}$ are coprime, so it is irreducible in
$\overline{\Fq}[x,y,z]$. This proves that $C$ is geometrically irreducible. It is the projective closure of the affine rational curve
\[
y=x^{p^2}.
\]
The curve is parametrized by
\[
[s:t]\longmapsto
[st^{p^2-1}:s^{p^2}:t^{p^2}].
\]
On the affine chart $t=1$, this gives the points
$[s:s^{p^2}:1]$, while $t=0$ gives the point
$P_\infty=[0:1:0]$. These exhaust $C(\Fq)$, so
\[
C(\Fq)
=
\{[s:s^{p^2}:1]:s\in\Fq\}\cup\{P_\infty\},
\]
and $|C(\Fq)|=q+1$.

The point $P_\infty$ is singular, whereas every affine point is smooth. At the point $[s:s^{p^2}:1]$, the tangent line is $y=s^{p^2}z$. All these tangent lines pass through $[1:0:0]$. We conclude that $C$ is a strange curve, and therefore nonreflexive.

Let $L$ be an $\Fq$-line with equation $ax+by+cz=0$. Suppose first that $b=0$, so $L$ has equation $ax+cz=0$ with $(a,c)\neq(0,0)$. Then $P_\infty$ lies on $L$, and substituting $[s:s^{p^2}:1]$ into the equation gives $as+c=0$. This linear equation has at most one solution $s\in\Fq$, so $L$ contains at most two points of $C(\Fq)$, hence at most $p$ since $p\ge 2$.

Now suppose that $b\neq0$. Then $L$ does not contain $P_\infty$, and its
intersections with the affine part of $C$ correspond to the roots in
$\Fq$ of
\[
bs^{p^2}+as+c=0.
\]
Set $A(T)=bT^{p^2}+aT$, an additive polynomial. If
$A(r)=-c$, then the solutions of $A(s)=-c$ are exactly the elements of
the coset $r+\ker(A)$.

It is enough to bound the size of $\ker(A)$. If $a=0$, then
$\ker(A)=\{0\}$. Suppose that $a\neq 0$. If $0\neq u\in\ker(A)$, then
\[
u^{p^2-1}=-\frac{a}{b}.
\]
If this equation has a solution $u_0\in\Fq^\times$, then all of its solutions are of the form $u_0w$, where $w^{p^2-1}=1$. Since
\[
\gcd(p^2-1,q-1) = \gcd(p^2-1,p^{2n+1}-1) = p^{\gcd(2,2n+1)}-1 = p-1,
\]
there are exactly $p-1$ solutions to the latter equation in
$\Fq^\times$. Therefore, $\ker(A)$ contains at most $p-1$ nonzero elements, yielding $|\ker(A)|\leq p$. Since every nonempty fiber of $A$ is a coset of $\ker(A)$, the equation $bs^{p^2}+as+c=0$ has at most $p$ roots in $\Fq$.

We have shown that every $\Fq$-line contains at most $p$ points of $C(\Fq)$. Since $|C(\Fq)|=q+1$, covering these points requires at least
\[
\left\lceil\frac{q+1}{p}\right\rceil = \frac{q}{p}+1
\]
lines. For the reverse inequality, consider the additive map
\[
B\colon\Fq\longrightarrow\Fq,
\qquad B(s)=s^{p^2}-s.
\]
Its kernel is
\[
\ker(B) = \bF_{p^2}\cap\Fq = \Fp,
\]
so $|\operatorname{im}(B)|=q/p$. For each $r\in\operatorname{im}(B)$, take the line
\[
L_r\colon y=x+rz.
\]
Every affine point $[s:s^{p^2}:1]$ lies on $L_{B(s)}$. These $q/p$ lines cover all affine points of $C(\Fq)$, and the line $z=0$ covers $P_\infty=[0:1:0]$. It follows that
\[
\operatorname{cov}(C)\leq\frac{q}{p}+1.
\]
Together with the lower bound established above, this proves
\[
\operatorname{cov}(C)=\frac{q}{p}+1,
\]
as claimed. \end{proof}

Since $C$ has degree $p^2$ while
\[
\operatorname{cov}(C)=\frac{q}{p}+1,
\]
these examples do not obey any bound of the form
$\left(\frac{1}{p^2}+\varepsilon\right)q$ when
$0<\varepsilon<\frac{1}{p}-\frac{1}{p^2}$. This confirms that the reflexivity hypothesis in Theorem~\ref{thm:main} is essential.

\begin{remark}\label{rem:monodromy-obstruction}
The preceding example also explains why the condition $\nu=1$ in the
application of Entin's theorem is not automatic. The smooth $\Fq$-points
of $C$ are
\[
P_a=[a:a^{p^2}:1],
\qquad
a\in\Fq.
\]
Every line through $P_a$, other than the line $x=az$, can be written as
\[
y-a^{p^2}z=m(x-az).
\]
Substituting $[s:s^{p^2}:1]$ into this equation gives
\[
(s-a)^{p^2}=m(s-a).
\]
Set $u=s-a$. The root $u=0$ corresponds to the fixed point $P_a$.
Removing this root and restricting to $m\neq0$ leaves the equation
\[
u^{p^2-1}=m
\]
for the remaining points. Under the identification of $U_{P_a}$ with $\mathbb G_m$, the residual cover $\phi_{P_a}$ is
\[
\mathbb G_m\longrightarrow\mathbb G_m, \qquad u\longmapsto u^{p^2-1}.
\]

Over $\overline{\Fq}$, this cover is Galois with geometric monodromy
group $C_{p^2-1}$. All $(p^2-1)$-th roots of unity lie in
$\bF_{p^2}$, and a primitive one generates $\bF_{p^2}$ over $\Fp$.
Moreover, $\bF_{p^2}\cap\Fq=\Fp$. Consequently, the smallest extension of $\Fq$ containing all these
roots of unity is $\bF_{q^2}$. The Galois closure of
$\Fq(u)/\Fq(m)$, where $m=u^{p^2-1}$, is $\bF_{q^2}(u)$. The arithmetic monodromy group is $C_{p^2-1}\rtimes C_2$, where the nontrivial element of $C_2$ acts by
\[
\zeta\longmapsto\zeta^q=\zeta^p.
\]
Here we used $q=p^{2n+1}\equiv p\pmod{p^2-1}$.
For $p=2$, this specializes to
$C_3\rtimes C_2\cong S_3$. In general, the exact
sequence~\eqref{eq:monodromy-exact-sequence} becomes
\[
1\longrightarrow C_{p^2-1}
\longrightarrow C_{p^2-1}\rtimes C_2
\longrightarrow \operatorname{Gal}(\bF_{q^2}/\Fq)
\longrightarrow 1.
\]
In particular, $\nu=2$, not $1$.

For a parameter $m\in\Fq^\times$, the Frobenius class of the fiber over $m$ maps to the nontrivial element of
$\operatorname{Gal}(\bF_{q^2}/\Fq)$, so it lies in the nontrivial coset of $C_{p^2-1}$ in the arithmetic monodromy group. The identity, however, lies in $C_{p^2-1}$ itself, so it cannot occur as the Frobenius class of any $\Fq$-fiber. Equivalently, the residual cover has no completely split fiber over $\Fq$. This is exactly the compatibility condition in Theorem~\ref{thm:entin-chebotarev}: the identity maps to the trivial element of $\operatorname{Gal}(\bF_{q^\nu}/\Fq)$, which is the Frobenius generator only when $\nu=1$. By
Lemma~\ref{lem:projection-monodromy}, for the covers $\phi_P$ used in the proof of Theorem~\ref{thm:main}, the geometric and arithmetic monodromy groups are both $S_{d-1}$. As a result, their quotient is trivial, so $\nu=1$ and the identity class is allowed.
\end{remark}

\section*{Acknowledgments}
The third author was partially supported by an NSERC Discovery Grant. The fourth author was partially supported by the REAL program at Santa Clara University (SCU). The second and fifth authors were partially supported by the REAL program and the Rick Scott Memorial Scholarship at SCU.

\end{document}